\theoremstyle{plain}
\newtheorem{theorem}{Theorem}
\newtheorem{lemma}[theorem]{Lemma}
\theoremstyle{definition}
\theoremstyle{remark}
\begin{document}

\newcommand{\Aut}{\textup{Aut}}
\newcommand{\al}{\alpha}
\newcommand{\Char}{\textup{char}}
\newcommand{\D}{\Delta}
\newcommand{\Deg}{\textup{Deg}}
\newcommand{\diag}{\textup{diag}}
\newcommand{\E}{\mathbb{E}}
\newcommand{\e}{\varepsilon}
\newcommand{\End}{\textup{End}}
\newcommand{\F}{\mathbb{F}}
\newcommand{\FG}{{\F G}}
\newcommand{\FH}{{\F H}}
\newcommand{\G}{\Gamma}
\newcommand{\g}{\gamma}
\newcommand{\Gal}{\textup{Gal}}
\newcommand\GL{\textup{GL}}
\newcommand{\K}{\mathbb{K}}
\newcommand\lpower{{\sqsupset}}
\newcommand{\im}{\textup{im}}
\newcommand{\ind}{{\uparrow}}
\newcommand{\Ind}{\textup{Ind}}
\newcommand{\Mat}{\textup{Mat}}
\newcommand\PGL{\textup{PGL}}
\newcommand{\Q}{\mathbb{Q}}
\newcommand{\res}{{\downarrow}}
\newcommand\rpower{{\sqsubset}}
\newcommand{\s}{\sigma}
\newcommand\tensor{\otimes}
\newcommand{\Z}{\mathbb{Z}}

\hyphenation{afforded}
\hyphenation{induced}

\title[\tiny\upshape\rmfamily Modules induced from a normal subgroup
of prime index]{}
\date{Submitted: 30 October, 2002}

\begin{center}\large\sffamily\mdseries
   Modules induced from a normal subgroup of prime index
\end{center}

\author{{\sffamily S.\,P. Glasby}}

\begin{abstract}
Let $G$ be a finite group and $H$ a normal subgroup of
prime index $p$. Let $V$ be an irreducible $\F H$-module
and $U$ a quotient of the induced $\F G$-module $V\ind$.
We describe the structure of $U$, which is semisimple
when $\Char(\F)\ne p$ and uniserial if $\Char(\F)=p$. 
Furthermore, we describe the division rings arising as
endomorphism algebras of the simple components of $U$.
We use techniques from noncommutative ring theory to
study $\End_{\F G}(V\ind)$ and relate the right
ideal structure of $\End_{\F G}(V\ind)$ to the submodule
structure of $V\ind$.
\end{abstract}

\maketitle
\centerline{\noindent 2000 Mathematics subject classification: 
         20C40, 16S35}

\section{Introduction}\label{S:intro}

\noindent Throughout this paper $G$ will denote a finite group 
and $H$ will denote a normal subgroup of prime index
$p$. Furthermore, $V$ will denote an irreducible
(right) $\F H$-module, and $V\ind = V\tensor_{\F H} \F G$
is the associated induced $\F G$-module. Let $a$ be
an element of $G$ not in $H$, and let $\D:=\End_{\F H}(V)$
and $\G:=\End_{\F G}(V\ind)$.

This paper is motivated by the following problem:
``Given an irreducible $\F H$-module $V$, where
$\F$ is an arbitrary field, and a quotient $U$ of
$V\ind$, determine the submodule structure of $U$
and the endomorphism algebras of the simple modules.''
By Schur's lemma, $\D$ is a division algebra over $\F$,
so we shall need techniques from noncommutative ring
theory.

We determine the submodule structure of $U$ by explicitly
realizing $\End_\FG(U)$ as a direct sum of minimal right
ideals, or as a local ring. It suffices to solve our problem
in the case when $U=V\ind$. Henceforth $U=V\ind$.

In the case when $\F$ is algebraically closed of
characteristic zero, it is well known that two
cases arise. Either $V$ is $G$-stable and $V\ind$
is irreducible, or $V$ is not $G$-stable and
$V\ind$ is a direct sum of $p$ pairwise nonisomorphic
irreducible submodules. In [GK96] the structure of
$V\ind$ is analyzed in the case when $\F$ is an
arbitrary field satisfying $\Char(\F)\ne 0$.
The assumption that $\Char(\F)\ne 0$ was made
to ensure that $\D$ is a field. The main theorem
of [GK96] states that the structure of $V\ind$ is 
divided into five cases when $V$ is $G$-stable.
In this paper, we drop the hypothesis that
$\Char(\F)\ne 0$, and even more cases arise in
the stable case (Theorems 5, 8 and 9). Fortunately,
all these cases can be unified by considering
the factorization of a certain binomial $t^p-\lambda$
in a twisted polynomial ring $\D[t;\alpha]$, which
is a (left and right) principal ideal domain.

As we will focus on the case when $\F$ need not be
algebraically closed, a crucial role will be
played by the endomorphism algebra $\D=\End_{\F H}(V)$.
In [GK96] the submodules of $V\ind$ are described 
\emph{up to isomorphism}. As this paper is motivated
by computational applications we will strive towards
a higher standard: an explicit description of the
vectors in the submodule, and an explicit description
of the matrices in the endomorphism algebra of
the submodule. This is easily achieved in the
non-stable case, which we describe for the sake
of completeness.

\section{The non-stable case}

\noindent Let $e_0,e_1,\dots,e_{d-1}$ be an $\F$-basis for $V$
and let $\s\colon H\to \GL(V)$ be the
representation afforded by the irreducible
$\F H$-module $V$ relative to this basis.
The \emph{$g$-conjugate} of $\s$ ($g\in G$)
is the representation $g\mapsto (ghg^{-1})\s$,
and we say that $\s$ is \emph{$G$-stable} if
for each $g\in G$, $\s$ is equivalent to its
$g$-conjugate. In this section we shall assume that
$\s$ is \emph{not} $G$-stable.

Let $\s\ind\colon G\to \GL(V\ind)$ be the
representation afforded by $V\ind$ relative to the basis
\[
e_0,\dots,e_{d-1},e_0a,\dots,e_{d-1}a,\dots,
e_0a^{p-1},\dots,e_{d-1}a^{p-1}.
\]
Note that $G/H=\langle aH\rangle$ has order $p$, and we are
writing $e_ia^j$ rather than $e_i\tensor a^j$. Then
\[
a\s\ind=\begin{pmatrix}
0&I& &0\\
 & &\ddots& \\
0&0& &I\\
a^p\s&0& &0
\end{pmatrix},\qquad
h\s\ind=\begin{pmatrix}
h\s& & & \\
 &({}^ah)\s& & \\
 & &\ddots& \\
 & & &({}^{a^{p-1}}h)\s\end{pmatrix}
\]
where $h\in H$ and ${}^{a^i}h=a^i h a^{-i}$.
The elements of $\End_{\F G}(V\ind)$
are the matrices commuting with $G\s\ind$, namely
the $p\times p$ block scalar matrices
${\rm diag}(\delta,\dots,\delta)$ where $\delta\in\D$.

We shall henceforth assume that $V$ is $G$-stable.
In particular, assume that we know $\alpha\in\Aut_\F(V)$
satisfying
\[\tag{1}
(aha^{-1})\s=\alpha (h\s) \alpha^{-1}\qquad\text{for all $h\in H$.}
\]
There are `practical' methods for computing $\alpha$.
A crude method involves viewing $(ah_i a^{-1})\s\alpha=\alpha(h_i\s)$,
where $H=\langle h_1,\dots,h_r\rangle$, as a
system of $(d/e)^2r$ homogeneous linear equations over $\D$ in
$(d/e)^2$ unknowns where $e=|\D:\F|$. The solution space is $1$-dimensional
if $V$ is $G$-stable, and $0$-dimensional otherwise.
A more sophisticated method, especially when
$\Char(\F)\ne 0$, involves using the meat-axe algorithm, see
[P84], [HR94], [IL00], and [NP95]. There is also a recursive method
for finding $\alpha$ which we shall not discuss here.

\section{The elements of $\End_{\F G}(V\ind)$}

\noindent In this section we explicitly describe the matrices
in $\G= \End_{\F G}(V\ind)$ and give an
isomorphism $\G\to (\D,\alpha,\lambda)$ where
$(\D,\alpha,\lambda)$ is a general cyclic $\F$-algebra, see [L91,~14.5].
It is worth recalling that if $\Char(\F)\ne 0$, then
the endomorphism algebra $\D$ is commutative.
Even in this case, though, $\G$ can be noncommutative.

\begin{lemma}
If $i\in\Z$, then $\alpha^{-i}a^i\colon V\to Va^i$ is an 
$\F H$-isomorphism between the submodules $V$ and
$Va^i$ of $V\ind\res$.
\end{lemma}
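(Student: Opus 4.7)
My plan is to verify the two things required of an $\F H$-isomorphism: that $\alpha^{-i}a^i$ is an $\F$-linear bijection $V\to Va^i$, and that it commutes with the $H$-action on both sides. The first is immediate, since $\alpha^{-i}\in\Aut_\F(V)$ and right multiplication by $a^i$ sends $V$ bijectively onto the $\F$-span $Va^i$, these being distinct free summands of the underlying $\F$-vector space of $V\ind$.

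The heart of the matter is $H$-equivariance, for which the key input is an iterated form of the intertwining relation~(1). First I would establish, by induction on $i$, that
\[
({}^{a^i}h)\s \;=\; \alpha^i\,(h\s)\,\alpha^{-i}\qquad\text{for all $i\in\Z$ and all $h\in H$.}
\]
The base case $i=1$ is~(1); the inductive step replaces $h$ by ${}^{a^{i-1}}h$ in~(1) and conjugates by $\alpha^{i-1}$. Invertibility of $\alpha$ extends the identity to negative $i$.

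With that in hand, I would unpack the $H$-equivariance condition using the convention that all maps act on the right. Writing the $H$-action on $Va^i\subseteq V\ind\res$ via $(va^i)h = v(a^ih a^{-i})a^i = \bigl(v\,({}^{a^i}h)\s\bigr)a^i$ (which is exactly what the diagonal blocks of $h\s\ind$ record), the requirement $(v\alpha^{-i}a^i)\cdot h \;=\; (v\cdot h)\alpha^{-i}a^i$ for $v\in V$, $h\in H$ reduces to the operator identity
\[
\alpha^{-i}\,({}^{a^i}h)\s \;=\; (h\s)\,\alpha^{-i},
\]
which is a trivial rearrangement of the iterated relation above.

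I do not anticipate a real obstacle here; the main care is bookkeeping left-versus-right, ensuring the induction is set up so that conjugation by $\alpha$ and by $a$ are compatible with the right module structure on $V\ind$. The lemma will be used repeatedly in the sequel, so laying it out cleanly with all conventions fixed is the real point.
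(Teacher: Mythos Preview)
Your proof is correct and follows essentially the same route as the paper: iterate Eqn~(1) to obtain $({}^{a^i}h)\s=\alpha^i(h\s)\alpha^{-i}$ for all $i\in\Z$, then rearrange to get $H$-equivariance of $\alpha^{-i}a^i$, with bijectivity being clear. The paper compresses this into two lines by writing the iterated relation directly as $va^iha^{-i}=v\alpha^i h\alpha^{-i}$ and substituting $v\mapsto v\alpha^{-i}$, but the content is identical.
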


\begin{proof}
It follows from Eqn (1) that 
\[
va^iha^{-i}=v\alpha^ih\alpha^{-i}\qquad\text{($i\in\Z$, $v\in V$)}
\]
Replacing $v$ by $v\alpha^{-i}$ gives $v\alpha^{-i}a^ih=vh\alpha^{-i}a^i$.
Hence $\alpha^{-i}a^i$ is an $\F H$-homomorphism,
and since it is invertible, it is an $\F H$-isomorphism.
\end{proof}

Conjugation by $\alpha$ induces an automorphism of $\D$,
which we also call~$\alpha$. [Proof: Conjugating the equation
$h\delta=\delta h$ by $\alpha$ and using (1) shows that $\alpha^{-1}\delta\alpha\in\D$.]
We abbreviate $\alpha^{-1}\delta\alpha=\delta^{\alpha}$ by $\alpha(\delta)$. The
reader can determine from the context whether the
symbol $\alpha$ refers to an element of $\Aut_\F(V)$,
or $\Aut_\F(\D)$.

It follows from Eqn (1) that 
\[
(a^p h a^{-p})\s = \alpha^p h\s \alpha^{-p}
\]
for all $h\in H$. Hence $\alpha^{-p}(a^p\s)$ centralizes $H$.
Therefore $\lambda:=\alpha^{-p}(a^p\s)$ lies in $\D^\times$.
Setting $h=a^p$ in Eqn (1) shows $a^p\s=\alpha(a^p\s)\alpha^{-1}$.
Thus 
\[
\alpha(\lambda)=\lambda^\alpha=(\alpha^{-p}(a^p\s))^\alpha
 =\alpha^{-p}(a^p\s)=\lambda.
\]
Conjugating by $\alpha^p=(a^p\s)\lambda^{-1}$ induces
an inner automorphism:
\[
\alpha^p(\delta)=\delta^{\alpha^p}=\delta^{(a^p\s)\lambda^{-1}}
=\delta^{\lambda^{-1}}=\lambda\delta\lambda^{-1}\qquad\text{($\delta\in\D$).}
\]
In summary, we have proved

\begin{lemma}
The element $\alpha\in\Aut_\F V$ satisfying \textup{Eqn (1)} induces via
conjugation an automorphism of $\D=\End_{\F H}(V)$, also called
$\alpha$. There exists $\lambda\in\D^\times$ satisfying
\[
\alpha^{-p}(a^p\s)=\lambda,\quad \alpha(\lambda)=\lambda,\quad
 \alpha^p(\delta)=\lambda\delta\lambda^{-1} \tag{2a,b,c}
\]
for all $\delta\in\D$.
\end{lemma}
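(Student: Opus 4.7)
The plan is to establish each assertion by direct manipulation of Eqn~(1), treating it as a cocycle-type identity and leveraging the fact that $a^p\in H$ (since $G/H$ has order $p$). First I would check that conjugation by $\alpha$ sends $\D$ into itself. Rewriting Eqn~(1) as $\alpha^{-1}(h\s)\alpha=(a^{-1}ha)\s$, I conjugate the defining relation $(h\s)\delta=\delta(h\s)$ by $\alpha$ and obtain
\[
(a^{-1}ha)\s\cdot\alpha^{-1}\delta\alpha=\alpha^{-1}\delta\alpha\cdot(a^{-1}ha)\s.
\]
Since $H$ is normal, $a^{-1}ha$ ranges over all of $H$ as $h$ does, so $\alpha^{-1}\delta\alpha$ centralizes $H\s$, i.e.\ lies in $\D$. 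Invertibility is clear, so $\delta\mapsto\alpha^{-1}\delta\alpha$ is an $\F$-automorphism of $\D$.

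Next I would iterate Eqn~(1) to get $(a^pha^{-p})\s=\alpha^p(h\s)\alpha^{-p}$ for all $h\in H$. Rewriting this as $(a^p\s)(h\s)(a^p\s)^{-1}=\alpha^p(h\s)\alpha^{-p}$ shows that $\lambda:=\alpha^{-p}(a^p\s)$ centralizes every $h\s$, hence $\lambda\in\D$; and $\lambda\in\D^\times$ since it is a product of units. For (2b), the key observation is that $a^p\in H$, so substituting $h=a^p$ into Eqn~(1) gives $a^p\s=\alpha(a^p\s)\alpha^{-1}$, i.e.\ $\alpha$ commutes with $a^p\s$. Therefore $\alpha$ also commutes with $\lambda=\alpha^{-p}(a^p\s)$, which is precisely $\alpha(\lambda)=\lambda$.

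Finally, for (2c), I would rearrange the definition of $\lambda$ as $\alpha^p=(a^p\s)\lambda^{-1}$, whence $\alpha^{-p}=\lambda(a^p\s)^{-1}$. For any $\delta\in\D$,
\[
\alpha^p(\delta)=\alpha^{-p}\delta\alpha^p=\lambda(a^p\s)^{-1}\delta(a^p\s)\lambda^{-1}=\lambda\delta\lambda^{-1},
\]
the last equality using that $a^p\s\in H\s$ commutes with every element of $\D$. The only real subtlety is noticing that $a^p\in H$, which both turns (1) into the self-commutation statement needed for (2b) and makes the inner conjugation in (2c) collapse; the rest is a bookkeeping exercise in Eqn~(1). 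I do not anticipate a genuine obstacle, as everything is forced by the normality of $H$ and the order-$p$ hypothesis.
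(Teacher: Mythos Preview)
Your proposal is correct and follows essentially the same route as the paper: conjugate the centralizing relation by $\alpha$ to get the induced automorphism of $\D$, iterate Eqn~(1) to see that $\lambda=\alpha^{-p}(a^p\s)$ centralizes $H\s$, specialize $h=a^p$ in Eqn~(1) for (2b), and then write $\alpha^p=(a^p\s)\lambda^{-1}$ and use $a^p\s\in\D'$ (the centralizer of $\D$) for (2c). The paper's argument is line-for-line the same.
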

\goodbreak

\begin{theorem}
The representation $\s\ind\colon G\to \GL(V\ind)$ afforded by
$V\ind$ relative to the $\F$-basis
\begin{align*}
  e_0,&e_1,\dots,e_{d-1},\dots,
    e_0\alpha^{-i}a^i,e_1\alpha^{-i}a^i,\dots,e_{d-1}\alpha^{-i}a^i,\\
  &\dots ,e_0\alpha^{-(p-1)}a^{p-1},e_1\alpha^{-(p-1)}a^{p-1},
    \dots,e_{d-1}\alpha^{-(p-1)}a^{p-1}\tag{3}
\end{align*}
for $V\ind$ is given by
\[
a\s\ind=\begin{pmatrix}
0&\alpha& &0\\
 & &\ddots& \\
0&0& &\alpha\\
\alpha\lambda&0& &0
\end{pmatrix},\quad
h\s\ind=\begin{pmatrix}
h\s& & & \\
 &h\s& & \\
 & &\ddots& \\
 & & &h\s\end{pmatrix}\tag{4a,b}
\]
where $h\in H$. Moreover, there is an isomorphism from the general cyclic
algebra $(\D,\alpha,\lambda)$ to $\End_{\F G}(V\ind)$
\[
(\D,\alpha,\lambda)\to \End_{\FG}(V\ind)\colon \sum_{i=0}^{p-1}\delta_ix^i
\mapsto \sum_{i=0}^{p-1} D(\delta_i)X^i
\]
where
\[
X=\begin{pmatrix}
0&I& &0\\
 & &\ddots& \\
0&0& &I\\
\lambda&0& &0
\end{pmatrix},\quad
D(\delta)=\begin{pmatrix}
\delta& & & \\
 &\alpha(\delta)& & \\
 & &\ddots& \\
 & & &\alpha^{p-1}(\delta)
\end{pmatrix}\tag{5a,b}
\]
and $\delta\in\D$.
\end{theorem}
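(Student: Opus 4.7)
My plan is to separate the theorem into two halves: the matrix formulae~(4a,b), and the algebra isomorphism via~(5a,b). Throughout I would work with the basis~(3), writing its vectors as $e_j\alpha^{-i}a^i$ with $0\le i\le p-1$ and $0\le j\le d-1$.

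For~(4a,b) direct computation suffices. By Lemma~1 each $\alpha^{-i}a^i$ is an $\F H$-isomorphism $V\to Va^i$, hence $(e_j\alpha^{-i}a^i)h=(e_jh\s)\alpha^{-i}a^i$ stays in the $i$th block and produces the block-diagonal form of $h\s\ind$. For $a\s\ind$: when $i<p-1$, $(e_j\alpha^{-i}a^i)a=(e_j\alpha)\alpha^{-(i+1)}a^{i+1}$, contributing the $\alpha$ in block $(i,i+1)$; when $i=p-1$, Eqn~(2a) gives $(e_j\alpha^{-(p-1)}a^{p-1})a=e_j\alpha^{-(p-1)}(a^p\s)=e_j\alpha\lambda$ back in block~$0$, producing the $\alpha\lambda$ corner.

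Next I would verify that $X$ and each $D(\delta)$ lie in $\G$. Commutation with $h\s\ind$ is immediate because $\lambda$ and every $\alpha^i(\delta)$ lie in $\D=\End_{\F H}(V)$. Commutation with $a\s\ind$ reduces to three identities handled by Lemma~2: $\alpha^i(\delta)\cdot\alpha=\alpha\cdot\alpha^{i+1}(\delta)$ (the definition of the induced $\alpha$); $\lambda\alpha=\alpha\lambda$ (from $\alpha(\lambda)=\lambda$, Eqn~(2b)); and $\alpha^{p-1}(\delta)\alpha\lambda=\alpha\lambda\delta$, which reduces to $\alpha^p(\delta)=\lambda\delta\lambda^{-1}$ (Eqn~(2c)). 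Having placed $X,\,D(\delta)$ in $\G$, I would check the three defining relations of $(\D,\alpha,\lambda)=\D\langle x\rangle/(xd-\alpha(d)x,\;x^p-\lambda)$: $D(\delta)D(\delta')=D(\delta\delta')$ is entry-wise; $XD(\delta)=D(\alpha(\delta))X$ is a short block calculation whose sole nontrivial corner is Eqn~(2c) again; and iterating the cyclic shift gives $X^p=\diag(\lambda,\alpha(\lambda),\ldots,\alpha^{p-1}(\lambda))=\lambda I$, the second equality using Eqn~(2b). Thus $\sum\delta_ix^i\mapsto\sum D(\delta_i)X^i$ is a well-defined $\F$-algebra homomorphism into~$\G$.

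Finally I would establish bijectivity. Injectivity is easy: the $(0,i)$ block of $\sum D(\delta_i)X^i$ is precisely $\delta_i$, so the map has zero kernel. The main obstacle is surjectivity, because we never construct an arbitrary element of $\G$ directly; I would handle it by a dimension count. By Frobenius reciprocity together with Lemma~1,
\[
\dim_\F\G \;=\; \dim_\F\textup{Hom}_{\F H}(V,V\ind\res) \;=\; p\cdot\dim_\F\End_{\F H}(V) \;=\; p\cdot\dim_\F\D,
\]
which equals the $\F$-dimension of $(\D,\alpha,\lambda)$. Hence the injective $\F$-linear map is bijective, completing the proof.
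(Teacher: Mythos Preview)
Your argument is correct. The verification of (4a,b) and of $X,D(\delta)\in\G$ and of the relations $X^p=\lambda I$, $XD(\delta)=D(\alpha(\delta))X$ is exactly what the paper does, only organized a bit differently (the paper observes $a\s\ind=AX$ with $A=\diag(\alpha,\dots,\alpha)$ and notes $A$ commutes with $X$ by Eqn~(2b), which is your identity $\lambda\alpha=\alpha\lambda$ in disguise).

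The genuine difference is surjectivity. You obtain it by a dimension count via Frobenius reciprocity, $\dim_\F\G=\dim_\F\textup{Hom}_{\F H}(V,V\ind\res)=p\dim_\F\D$, using Lemma~1 to identify each $Va^i$ with $V$. The paper instead argues \emph{directly}: any block matrix $(\delta_{i,j})$ with entries in $\D$ that commutes with $a\s\ind$ is determined by its $0$th row (reading off the relation $(a\s\ind)(\delta_{i,j})=(\delta_{i,j})(a\s\ind)$ row by row), and since the $0$th row of $\sum D(\delta_i)X^i$ is $(\delta_0,\dots,\delta_{p-1})$, every element of $\G$ is hit. Your route is cleaner and more conceptual; the paper's is more explicit and self-contained, and has the side benefit of exhibiting directly how the entries $\delta_{i,j}$ of a general element of $\G$ are determined by $\delta_{0,0},\dots,\delta_{0,p-1}$, which matches the paper's emphasis on explicit descriptions for computational purposes.
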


\begin{proof}
By Lemma 1, $\alpha^{-i}a^i\colon V\to Va^i$ is an $\F H$-isomorphism.
Hence $h\s\ind$ is the $p\times p$ block scalar matrix
given by Eqn (4b). Similarly, $a\s\ind$ is given by Eqn (4a) as
\[
(v\alpha^{-i}a^i)a=v\alpha\alpha^{-(i+1)}a^{i+1}\quad\text{and}\quad
(v\alpha^{-(p-1)}a^{p-1})a=v\alpha(\alpha^{-p}a^p)=v\alpha\lambda
\]
where the last step follows from Eqn (2a).

We follow [J96] and write $R=\D[t;\alpha]$ for the twisted
polynomial ring with the usual addition, and multiplication
determined by $t\delta=\alpha(\delta)t$ for $\delta\in\D$. The right ideal
$(t^p-\lambda)R$ is two-sided as
\[
t(t^p-\lambda)=(t^p-\lambda)t\quad\text{and}\quad \delta(t^p-\lambda)
  =(t^p-\lambda)\lambda^{-1}\delta\lambda
\]
by virtue of Eqns (2b) and (2c). The general cyclic algebra
$(\D,\alpha,\lambda)$ is defined to be the quotient ring
$R/(t^p-\lambda)R$. Since $R$ is a (left) euclidean domain,
the elements of $(\D,\alpha,\lambda)$ may be written uniquely as
$\sum_{i=0}^{p-1} \delta_ix^i$ where $x=t+(t^p-\lambda)R$, and
multiplication is determined by the rules $x^p=\lambda$ and
$x\delta=\alpha(\delta)x$ where $\delta\in\D$.

The matrices commuting with $H\s\ind$ are precisely the
block matrices $(\delta_{i,j})_{0\le i,j<p}$ where $\delta_{i,j}
\in\D$. To compute $\End_{\F G}(V\ind)$, we determine
the matrices $(\delta_{i,j})$ that commute with $a\s\ind$. If $0<i<p-1$,
then comparing row $i-1$ of both sides of 
$(a\s\ind)(\delta_{i,j})=(\delta_{i,j})(a\s\ind)$
shows how to express $\delta_{i,j}$ in terms of the
$\delta_{i-1,k}$. Similarly, row $p-1$ shows how to express
$\delta_{p-1,j}$ in terms of $\delta_{0,k}$. It follows that
a matrix $(\delta_{i,j})$ commuting with $a\s\ind$ is completely
determined once we know the $0$th row $\delta_{0,j}$. We show
that the $0$th row can be arbitrary.  The $0$th row of
$\sum_{i=0}^{p-1}D(\delta_i)X^i$ is
$(\delta_0,\delta_1,\dots,\delta_{p-1})$.
This element lies in $\G$ as we show that $D(\delta_i),X\in\G$.

To see that $D(\delta)\in\G$, we show that
$(a\s\ind)D(\delta)=D(\delta)(a\s\ind)$. The first product equals
\[
(a\s\ind)D(\delta)=\begin{pmatrix}
0&\delta\alpha& &0\\
 & &\ddots& \\
0&0& &\alpha^{-(p-2)}\delta\alpha^{p-1}\\
\alpha\lambda\delta&0& &0
\end{pmatrix}
\]
and the second product is identical if
$\alpha\lambda\delta=\delta^{\alpha^{p-1}}\alpha\lambda$. However, this
is true by Eqn (2c). To see that
$X\in\G$,
write $a\s\ind=AX$ where $A=\diag(\alpha,\dots,\alpha)$.
It follows from Eqn (2b) that $A$ and $X$ commute. Therefore
$a\s\ind=AX$ and $X$ commute.

In summary, elements of $\G$ may be written
uniquely as $\sum_{i=0}^{p-1}D(\delta_i)X^i$ where
$\delta_i\in\D$. Since $X^p=\lambda I$ and $XD(\delta)=D(\alpha(\delta))X$
it follows that the map $\sum_{i=0}^{p-1}\delta_i x^i\mapsto
\sum_{i=0}^{p-1}D(\delta_i)X^i$  is an isomorphism
$(\D,\alpha,\lambda)\to\G$ as claimed.
\end{proof}

A consequence of Eqn (2c) is that $\alpha$ has order $p$ or
$1$ modulo the inner automorphisms of $\D$. It follows
from the Skolem-Noether theorem [CR90,3.62] that the order of
$\alpha$ modulo inner automorphisms is precisely the
order of the restriction $\alpha|Z$, where $Z=Z(\D)$ is the
centre of $\D$.

\section{The case when $\alpha|Z$ has order $p$}

\noindent In this section we determine the structure of
$\G:=\End_{\F G}(V\ind)$ in the case when
$\alpha$ induces an automorphism of order $p$ on the
field $Z(\D)$.

Of primary interest to us is Part (a) of the
following classical theorem. Although this result
can be deduced from [J96, Theorem~1.1.22] and the
fact that $t^p-\lambda$ is a `two-sided maximal' element
of $\D[t;\alpha]$, we prefer to give an elementary proof
which generalizes [L91, Theorem~14.6].

\begin{theorem}
Let $\G$ be the general cyclic algebra $(\D,\alpha,\lambda)$
where $\lambda\ne 0$ and $\alpha(\lambda)=\lambda$.
Suppose that $\alpha|Z(\D)$ has order $p$, and 
fixed subfield $Z_0$. Then\newline
\textup{(a)} $\G$ is a simple $Z_0$-algebra,\newline
\textup{(b)} $C_\G(\D)=Z(\D)$,\newline
\textup{(c)} $Z(\G)=Z_0$, and\newline
\textup{(d)} $|\G:Z_0|=(p\Deg(\D))^2$ where $|\D:Z(\D)|=\Deg(\D)^2$.
\end{theorem}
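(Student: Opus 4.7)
The plan is to work throughout with the normal form $\sum_{i=0}^{p-1}\delta_i x^i$ for elements of $\G=(\D,\alpha,\lambda)$, to exploit the rules $x^p=\lambda$ and $x\delta=\alpha(\delta)x$, and to use repeatedly that the extension $Z(\D)/Z_0$ is cyclic of degree $p$ (since $\alpha|Z(\D)$ is a generator of its Galois group). The heart of the matter is part (a); once that is in hand, parts (b)--(d) come almost for free.

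For (a), I would let $J$ be a nonzero two-sided ideal of $\G$ and pick $u\in J\setminus\{0\}$ with the fewest nonzero coefficients, writing $u=\delta_{i_1}x^{i_1}+\cdots+\delta_{i_m}x^{i_m}$ with $0\le i_1<\cdots<i_m\le p-1$ and each $\delta_{i_j}\in\D^\times$. For any $z\in Z(\D)$ the twisted difference $zu-u\,\alpha^{-i_1}(z)$ again lies in $J$, and its $x^{i_1}$-term vanishes automatically (since $\delta_{i_1}\alpha^{i_1}(\alpha^{-i_1}(z))=\delta_{i_1}z=z\delta_{i_1}$). Minimality forces this element to be zero, which at the coefficient of $x^{i_j}$ reads $z\delta_{i_j}=\delta_{i_j}\,\alpha^{i_j-i_1}(z)$. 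Since $\alpha^{i_j-i_1}(z)\in Z(\D)$ commutes with $\delta_{i_j}$, this rearranges to $(z-\alpha^{i_j-i_1}(z))\delta_{i_j}=0$; with $\delta_{i_j}\ne 0$ we obtain $\alpha^{i_j-i_1}|Z(\D)=\mathrm{id}$, and for $0<i_j-i_1<p$ this contradicts the hypothesis on $\alpha|Z(\D)$. Hence $m=1$ and $u=\delta_{i_1}x^{i_1}$, which is a unit (both $\delta_{i_1}\in\D^\times$ and $x$ are invertible via $x^{-1}=\lambda^{-1}x^{p-1}$), so $J=\G$.

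Parts (b)--(d) follow with considerably less effort. For (b), if $u=\sum\delta_i x^i$ centralizes $\D$, then $\delta\delta_i=\delta_i\alpha^i(\delta)$ for every $\delta\in\D$; restricting to $\delta\in Z(\D)$ and repeating the trick above shows $\delta_i=0$ for $0<i<p$, while $i=0$ yields $\delta_0\in Z(\D)$. For (c), $Z(\G)\subseteq C_\G(\D)=Z(\D)$, and $z\in Z(\D)$ commutes with $x$ iff $\alpha(z)=z$, so $Z(\G)=Z_0$; this also exhibits $\G$ as a $Z_0$-algebra. For (d), the tower $Z_0\subseteq Z(\D)\subseteq\D\subseteq\G$ has successive indices $p$, $\Deg(\D)^2$, and $p$ (the last from the $\D$-basis $1,x,\dots,x^{p-1}$), and these multiply to $(p\,\Deg(\D))^2$. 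The main obstacle is the minimality step in (a): the twisted difference $zu-u\alpha^{-i_1}(z)$ must be rigged so its leading coefficient cancels automatically, and the resulting scalar equations must be converted into a contradiction using only the order-$p$ hypothesis on $Z(\D)$ rather than any stronger dichotomy about inner vs.\ outer automorphisms of $\D$.
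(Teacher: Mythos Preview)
Your proof is correct and follows the same overall architecture as the paper's: a minimal-support argument for (a) using a twisted difference that kills the lowest term, with (b)--(d) dispatched routinely. The one noteworthy difference is in the choice of the commutator element. The paper multiplies on the left by $\gamma_1\alpha^{i_1}(\delta)\gamma_1^{-1}$ and on the right by an \emph{arbitrary} $\delta\in\D$, obtaining from the vanishing coefficients that $\alpha^{i_1}$ and $\alpha^{i_k}$ agree modulo \emph{inner} automorphisms of $\D$; the contradiction then appeals to the Skolem--Noether consequence (stated just before the theorem) that the order of $\alpha$ modulo inner automorphisms equals the order of $\alpha|Z(\D)$. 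You instead restrict to $z\in Z(\D)$ from the outset, so the cancellation of the $x^{i_1}$-term is automatic and the vanishing coefficients give $\alpha^{i_j-i_1}|Z(\D)=\mathrm{id}$ directly. Your route is marginally more elementary (it never invokes Skolem--Noether), while the paper's version yields the slightly stronger intermediate conclusion about inner automorphisms and, as the paper notes, does not require $p$ to be prime---though in fact your argument does not need primality either, only that $\alpha|Z(\D)$ has order exactly $p$.
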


\begin{proof}
The following proof does not assume that $p$ is prime.
Let $\g=\g_1x^{i_1}+\cdots+\g_rx^{i_r}$ be a nonzero
element of an ideal $I$ of $\G$, where $0\le i_1<\cdots<i_r<p$,
$\g_i\in\D$, and $r$ is chosen minimal. By minimality,
each $\g_i$ is nonzero. To prove Part (a) it suffices to
to prove that $r=1$. Then $I=\G$ as $\g_1x^{i_1}\in I$
is a unit because $\g_1$ and $x$ are both units. Assume now that
$r>1$. Then
\[
(\g_1\alpha^{i_1}(\delta)\g_1^{-1})\g-\g\delta=
\sum_{k=2}^r \left(\g_1\alpha^{i_1}(\delta)\g_1^{-1}\g_k
-\g_k\alpha^{i_k}(\delta)\right)x^{i_k}
\]
lies in $I$ for each $\delta\in\D$. By the minimality of $r$,
each coefficient of $x^{i_k}$ is zero. This implies
that $\alpha^{i_1}$ equals $\alpha^{i_k}$ modulo inner automorphisms
for $k=2,\dots,r$. This contradiction proves Part (a).

The proofs of Parts (b) and (c) are  straightforward, so
we shall omit their proofs. Part (d) follows from $|\G:\D|=|Z(\D):Z_0|=p$,
and $|\D:Z(\D)|=\Deg(\D)^2$ is a square.
\end{proof}

Before proceeding to Theorem~5, we define the \emph{left-}
and \emph{right-twisted powers},
$\mu^{\lpower i}$ and $\mu^{i\rpower}$, where $\mu\in\D$ and $i\in\Z$.
These expressions are like norms, indeed Jacobson [J96] uses the notation
$N_i(\mu)$ to suggest this. These ``norms'', however, are not multiplicative
in general. Consider the twisted polynomial ring $\D[t;\alpha]$ and define
\[
  (\mu t)^i=\mu^{\lpower i}t^i,\quad\textup{and}\quad
  (t\mu)^i=t^i\mu^{i\rpower}
\]
for $\mu\in\D$ and $i\in\Z$. It follows from the power laws
$(\mu t)^i(\mu t)^j=(\mu t)^{i+j}$ and $((\mu t)^i)^j=(\mu t)^{ij}$ that
\[
  \mu^{\lpower i}\alpha^i(\mu^{\lpower j})=\mu^{\lpower (i+j)},
  \quad\textup{and}\quad
  \mu^{\lpower i}\alpha^i(\mu^{\lpower i})\cdots
    \alpha^{i(j-1)}(\mu^{\lpower i})=\mu^{\lpower (ij)}
\]
for $i,j\in\Z$. Similar laws hold for right-twisted powers. The
left-twisted powers of nonnegative integers can be defined by the
recurrence relation
\[
  \mu^{\lpower 0}=1,\quad\textup{and}\quad
  \mu^{\lpower (i+1)}=\mu^{\lpower i}\alpha^i(\mu)=\mu\alpha(\mu^{\lpower i})
  \quad\textup{for $i\ge 0$},\tag{6}
\]
and negative powers can be defined by
$\mu^{\lpower -i}=\alpha^i(\mu^{\lpower i})^{-1}$.

It is important in the sequel whether or not $\lambda^{-1}$ has a
left-twisted $p$th root.

\begin{theorem}
Let $V$ be a $G$-stable irreducible $\F H$-module where
$H\triangleleft G$ and $|G/H|=p$ is prime.
Let $\alpha$ and $\lambda$ be as in Lemma~2.
Suppose that $\alpha|Z$ has order $p$ where $Z=Z(\D)$ and
$\D=\End_{\FG}(V)$.\newline
\textup{(a)} If the equation $\mu^{\lpower p}=\lambda^{-1}$
has no solution for $\mu\in\D^\times$, then $V\ind$ is
irreducible, and $\End_{\F G}(V\ind)$ is isomorphic to
the general cyclic algebra $(\D,\alpha,\lambda)$ as per Theorem~3.\newline
\textup{(b)} If $\mu\in\D^\times$ satisfies $\mu^{\lpower p}=\lambda^{-1}$, then
$V\ind = U(\mu_0)\dotplus\cdots\dotplus U(\mu_{p-1})$ where
\[
  U(\mu_j)=V\sum_{i=0}^{p-1} \mu_j^{\lpower i}\alpha^{-i}a^i
  \qquad\qquad(j=0,1,\dots,p-1)
\]
are isomorphic irreducible submodules satisfying $U(\mu_j)\res\cong V$, and
where $\mu_j^{\lpower p}=\lambda^{-1}$. Moreover, if
$\rho\colon G\to\GL(U(\mu))$ is the representation afforded by $U(\mu)$
relative to the basis $e'_0,\dots,e'_{p-1}$ where 
\[
  e'_j=e_j\sum_{i=0}^{p-1} \mu^{\lpower i}\alpha^{-i}a^i\qquad
  (j=0,1,\dots,d-1),
\]
then $a\rho=\alpha\mu^{-1}$, $h\rho=h\sigma$ for $h\in H$, and
\[
  \End_{\FG}(U(\mu))=C_\D(\alpha\mu^{-1})=
  \{\delta\in\D\mid \delta^\alpha=\delta^\mu\}.
\]
\end{theorem}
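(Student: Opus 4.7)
The plan combines Theorems 3 and 4(a) with explicit constructions in the $e_j'$-basis. The unifying observation is that $(\alpha\mu^{-1})^p=\alpha^p(\mu^{\lpower p})^{-1}$, which equals $a^p\s=\alpha^p\lambda$ exactly when $\mu^{\lpower p}=\lambda^{-1}$; so the theorem's hypothesis is precisely the condition for $a\mapsto\alpha\mu^{-1}$ to extend $\s$ to a representation of $G$ on $V$.

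For part (a): Theorem 3 identifies $\G$ with $(\D,\alpha,\lambda)$ and Theorem 4(a) says this algebra is simple. I would show $\G$ is in fact a division ring. Iterated right division in $\D[t;\alpha]$ shows that $t-\nu$ right divides $t^p-\lambda$ iff $\alpha^{p-1}(\nu)\cdots\alpha(\nu)\nu=\lambda$; setting $\mu=\nu^{-1}$ and using $(abc)^{-1}=c^{-1}b^{-1}a^{-1}$ rewrites this as $\mu^{\lpower p}=\lambda^{-1}$. The non-existence hypothesis thus rules out degree-$1$ right divisors; extending this to rule out all proper right divisors (using that $p$ is prime and $\alpha|Z$ has order $p$) makes $\G$ a division ring. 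Irreducibility of $V\ind$ then follows: any proper nonzero submodule $W$ would, by Frobenius reciprocity $\textup{Hom}_{\FG}(V\ind,W)\cong\textup{Hom}_{\FH}(V,W\res)\ne 0$, yield a nonzero non-surjective element of $\G$, contradicting the division-ring property.

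For part (b), set $w=\sum_{i=0}^{p-1}\mu^{\lpower i}\alpha^{-i}a^i$, so $U(\mu)=Vw$. For $h\in H$, iterating Eqn (1) gives $(a^iha^{-i})\s=\alpha^i(h\s)\alpha^{-i}$; combined with the fact that $\mu^{\lpower i}\in\D$ centralizes $H\s$, this yields $(vw)h=(vh\s)w$. For $a$, the internal terms of $(vw)a=\sum_i v\mu^{\lpower i}\alpha^{-i}a^{i+1}$ combine via the recurrence $\mu^{\lpower j}=\mu\alpha(\mu^{\lpower(j-1)})$ of Eqn (6), while the wrap-around term at $i=p-1$ uses $a^p\s=\alpha^p\lambda$ together with $\mu^{\lpower p}=\lambda^{-1}$; the two together give $(vw)a=(v\alpha\mu^{-1})w$. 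Hence $a\rho=\alpha\mu^{-1}$ and $h\rho=h\s$ in the $e_j'$-basis. I would next verify that $p$ distinct solutions $\mu_0,\dots,\mu_{p-1}$ exist and that $V\ind=\bigoplus_j U(\mu_j)$, via a Vandermonde-type argument on the matrix $(\mu_j^{\lpower i})_{i,j}$ together with $\F$-dimension counting. Finally, any $\FG$-endomorphism of $U(\mu)$ restricts to an element $\delta\in\D=\End_{\FH}(V)$, and its commutation with $a\rho=\alpha\mu^{-1}$ rearranges (using $\delta\alpha=\alpha\cdot\alpha(\delta)$) to $\mu\alpha(\delta)\mu^{-1}=\delta$, i.e., $\delta^\alpha=\delta^\mu$; this algebra is a division ring, whence each $U(\mu_j)$ is irreducible.

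The main obstacle is the step in part (a) from ``no degree-$1$ right divisor of $t^p-\lambda$'' to ``no proper right divisor at all'': this upgrade from the simplicity of $\G$ (Theorem 4(a)) to the division-ring property is the technical key to irreducibility of $V\ind$, and appears to require genuinely non-trivial input from the factorization theory of the twisted polynomial ring $\D[t;\alpha]$.
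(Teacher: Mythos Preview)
Your outline for part~(a) matches the paper's: Theorem~3 identifies $\G$ with $(\D,\alpha,\lambda)$, Theorem~4(a) gives simplicity, and the upgrade to a division ring is exactly the cited result [J96, Theorem~1.3.16], which you correctly flag as the technical crux. Your Frobenius-reciprocity deduction of irreducibility from the division-ring property is correct and slightly more explicit than the paper's one-line conclusion.

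For part~(b) your direct verification that $U(\mu)$ is a $G$-submodule affording $a\rho=\alpha\mu^{-1}$, $h\rho=h\s$ is fine and parallels the paper. Two points, however, diverge.

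First, your Vandermonde route to $V\ind=\bigoplus_j U(\mu_j)$ is not the paper's, and it is left vague precisely where it is delicate: you would need to produce $p$ solutions $\mu_j$ for which the twisted Vandermonde matrix $(\mu_j^{\lpower i})$ is invertible over the noncommutative ring $\D$, and this is not a one-line fact. The paper instead reuses Theorem~4(a). It first proves a converse you omit: \emph{every} submodule $U$ with $U\res\cong V$ arises as $U(\mu)$ for some $\mu$ with $\mu^{\lpower p}=\lambda^{-1}$ (write a general $\FH$-embedding as $\phi=\sum_i\delta_i\alpha^{-i}a^i$, derive $\phi a=\alpha\mu^{-1}\phi$ for some $\mu\in\D^\times$, and read off the recurrence $\delta_{i+1}=\mu\alpha(\delta_i)$). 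Then, since $\G$ is simple, the socle $U(\mu)\G$ is all of $V\ind$ (the two-sided ideal $\{\gamma:V\ind\gamma\subseteq U(\mu)\G\}$ is nonzero, hence equals $\G$), so $V\ind$ is a direct sum of simples isomorphic to $U(\mu)$; by the classification just proved, each summand is some $U(\mu_j)$. No Vandermonde is needed.

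Second, your final inference ``$\End_{\FG}(U(\mu))$ is a division ring, whence $U(\mu)$ is irreducible'' is not valid in general: the natural two-dimensional module for the algebra of upper-triangular $2\times 2$ matrices over a field has endomorphism ring equal to the field yet is not simple. Irreducibility of $U(\mu)$ follows instead from something you already established, namely $h\rho=h\s$, which says $U(\mu)\res\cong V$ is irreducible already over $\FH$.
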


\begin{proof}
By Theorem 4(a), $(\D,\alpha,\lambda)$ is a simple ring.
In Part(a) more is true: $(\D,\alpha,\lambda)$ is a division ring
by [J96, Theorem~1.3.16]. By Theorem~3, $(\D,\alpha,\lambda)$ is isomorphic to
$\End_{\F G}(V\ind)$ and so we have proved that $V\ind$ is irreducible
as desired.

Consider Part (b). Let $s=\mu t$ be an element of the
twisted polynomial ring $\D[t;\alpha]$, then
$s^i=(\mu t)^i=\mu^{\lpower i}t^i$
and
\[
  s\delta=\mu t\delta=(\mu\alpha(\delta)\mu^{-1})\mu t
   =\mu\alpha(\delta)\mu^{-1}s.
\]
Therefore the map $\D[t;\alpha]\to\D[s;\alpha\mu^{-1}]\colon\kern-2pt
\sum_{i=0}^{p-1} \delta_it^i\mapsto
\sum_{i=0}^{p-1} \delta_i(\mu^{\lpower i})^{-1}s^i$ 
is an isomorphism. We are abusing notation here by identifying
$\alpha\mu^{-1}$ in $\Aut_\F(V)$ with $\delta\mapsto\delta^{\alpha\mu^{-1}}$
in $\Aut_\F(\D)$. 
If $y=\mu x$, then
\[
  y^p=(\mu x)^p=\mu^{\lpower p}\lambda=\lambda^{-1}\lambda=1.
\]
By taking quotients we get an isomorphism
$(\D,\alpha,\lambda)\to(\D,\alpha\mu^{-1},1)$
given by
\[
  \sum_{i=0}^{p-1} \delta_ix^i\mapsto
  \sum_{i=0}^{p-1} \delta_i(\mu^{\lpower i})^{-1}y^i
\]
where $x=t+(t^p-\lambda)$ and
$y=s+(s^p-1)$.

As $y^p-1=(y-1)(y^{p-1}+\cdots+y+1)$, and $\D[s,\alpha\mu^{-1}]$
is right euclidean it follows that $(y-1)(\D,\alpha\mu^{-1},1)$ is a maximal
right ideal of $(\D,\alpha\mu^{-1},1)$. Now $y-1$ corresponds to
$\mu x-1$ which corresponds to $D(\mu) X -1$ whose kernel gives rise to
the irreducible submodule $U(\mu)$ of $V\ind$ in the statement of Part(b).
We shall reprove this, and prove a little more, using a more elementary
argument.

Let $U$ be a submodule of $V\ind$ satisfying $U\res\cong V$.
Let $\phi\colon V\to V\ind$ be an $\F H$-homomorphism such that
$V\phi=U\res$. Let $\pi_i\colon V\ind\to Va^i$ be the $\F H$-epimorphism
given by $(\sum_{i=0}^{p-1}v_i\alpha^{-i}a^i)\pi_i=v_i\alpha^{-i}a^i$.
Then $\delta_i=\phi\pi_i a^{-i}\alpha^i$ is an $\F H$-homomorphism
$V\to V$, or an element of $\D$. Since $\pi_0+\pi_1+\cdots+\pi_{p-1}$
is the identity map $1\colon V\ind\to V\ind$, it follows that
\[
  \phi=\phi 1=\phi(\pi_0+\pi_1+\cdots+\pi_{p-1})
  =\sum_{i=0}^{p-1}\delta_i\alpha^{-i}a^i.
\]
We now view $\phi$ as a map $V\to U$ and note that $U=Ua$.
Then $\alpha^{-1}a\colon V\to Va$, $a^{-1}\phi a\colon Va\to Ua$
and $\phi^{-1}\colon Ua\to V$ are each $\FH$-isomorphisms.
Hence their composite,
$(\alpha^{-1}a)(a^{-1}\phi a)\phi^{-1}$ is an isomorphism $V\to V$,
denoted $\mu^{-1}$ where $\mu\in\D^\times$.
Rearranging gives $\phi a=\alpha\mu^{-1}\phi$. Therefore,
\[
  (v\phi)a=\left(v\sum_{i=0}^{p-1}\delta_i\alpha^{-i}a^i\right)a=
  v\alpha\mu^{-1}\sum_{i=0}^{p-1}\delta_i\alpha^{-i}a^i
\]
for all $v\in V$. The expression $(v\delta_i\alpha^{-i}a^i)a$ equals
\[
  v\delta_i\alpha\alpha^{-(i+1)}a^{i+1}
  =v\alpha\delta_i^\alpha\alpha^{-(i+1)}a^{i+1}
  =v\alpha\mu^{-1}\delta_{i+1}\alpha^{-(i+1)}a^{i+1}.
\]
Setting $i=p-1$ gives
\[
  (v\delta_{p-1}\alpha^{-(p-1)}a^{p-1})a
  =v\alpha\delta_{p-1}^\alpha\alpha^{-p}a^p
  =v\alpha\delta_{p-1}^\alpha\lambda=v\alpha\mu^{-1}\delta_0.
\]
Therefore $\delta_i^\alpha=\mu^{-1}\delta_{i+1}$ for $i=0,\dots,p-2$ and
$\delta^\alpha_{p-1}\lambda=\mu^{-1}\delta_0$. If $\delta_0=0$, then
each $\delta_i=0$ and $\phi=0$, a contradiction. Thus $\delta_0\ne 0$ and
as $V\delta_0^{-1}\phi=U$,
we may assume that $\delta_0=1$. It follows from Eqn~(6)
that $\delta_i=\mu^{\lpower i}$ is the solution to
the recurrence relation: $\delta_0=1$ and
$\mu\delta_i^\alpha=\delta_{i+1}$ for $i\ge0$. Furthermore
$\mu\delta^\alpha_{p-1}=\lambda^{-1}$ implies
that $\mu^{\lpower p}=\lambda^{-1}$.
In summary, any submodule $U$ of $V\ind$ satisfying $U\res\cong V$
equals $U(\mu)$ for some $\mu$ satisfying $\mu^{\lpower p}=\lambda^{-1}$.
Furthermore, by retracing the above argument, if
$\mu^{\lpower p}=\lambda^{-1}$, then $U(\mu)$ is an irreducible submodule
of $V\ind$ satisfying $U\res\cong V$.

As $\End_{\FG}(V\ind)$ is a simple ring, $V\ind$ is a direct sum
of isomorphic simple submodules. Therefore, 
$V\ind = U(\mu_0)\dotplus\cdots\dotplus U(\mu_{p-1})$ as desired.
It follows from Lemma~1 that the representation $\rho\colon G\to\GL(V)$
satisfies $a\rho=\alpha\mu^{-1}$
and $h\rho=h\sigma$ for $h\in H$.
Consequently, the matrices commuting with $G\rho$ equal
the elements of $\D$ centralizing $a\rho$. Hence
$\End_{\F G}(U(\mu))=C_\D(\alpha\mu^{-1})$ as claimed.
\end{proof}

\section{The case when $\alpha$ is inner}

\noindent In this section assume that $\alpha|Z(\D)$ has order $1$,
or equivalently by the Skolem-Noether theorem,
that $\alpha$ is inner. Fix $\e\in\D^\times$ such that $\alpha$ is the 
inner automorphism $\alpha(\delta)=\e^{-1}\delta\e$. Clearly
$\alpha(\e)=\e$ and by Eqn~(2c)
$\e^{-p}\delta\e^p=\alpha^p(\delta)=\lambda\delta\lambda^{-1}$. Therefore,
$\eta=\e^p\lambda\in Z(\D)$.
If $y=\e x$, then $y^p=\e^{\lpower p}x^p=\e^p\lambda=\eta$
and $y\delta=\e x\delta=\e\delta^\e x=\delta\e x =\delta y$.
Hence
\[
(\D,\alpha,\lambda)\to(\D,1,\eta)\colon \sum_{i=0}^{p-1} \delta_ix^i\mapsto
 \sum_{i=0}^{p-1} \delta_i\e^{-i}y^i\tag{8}
\]
is an isomorphism. Thus we may untwist $\End_{\FG}(V\ind)$. 

\begin{theorem}
Let $V$ be a $G$-stable irreducible $\F H$-module where
$H\triangleleft G$ and $|G/H|=p$ is prime.
Suppose that $\alpha$ induces the inner
automorphism $\alpha(\delta)=\delta^\e$ of the division algebra
$\D=\End_{\F H}(V)$. Then
$\eta=\e^p\lambda\in Z^\times$ where $Z=Z(\D)$. Suppose that
$s^p-\eta=\nu(s)\mu(s)$
where $\mu(s)=\sum_{i=0}^m \mu_i s^i$ and $\nu(s)=\sum_{i=0}^{p-m} \nu_i s^i$,
are monic polynomials in $\D[s]$. Then
$W_\mu=\sum_{i=0}^{m-1}V\sum_{j=0}^{p-m}\nu_j\e^{i+j}\alpha^{-(i+j)}a^{i+j}$
is a submodule of $V\ind$. Let $\rho\colon G\to\GL(W_\mu)$ be the
representation afforded by $W_\mu$ relative to the basis
\[
  e'_0,\dots,e'_{d-1},\dots,e'_j(\e X)^k,\dots,
    e'_0(\e X)^{m-1},\dots,e'_{d-1}(\e X)^{m-1}\tag{9}
\]
where
\[
  e'_k=e_k\sum_{j=0}^{p-m}\nu_j\e^j\alpha^{-j}a^j=
  e_k\sum_{j=0}^{p-m}\nu_j(\e X)^j,
\]
and $X$ is given
by \textup{Eqn~(5a)}. Then
\[
  a\rho=\alpha\e^{-1}\begin{pmatrix}
0&1& &0\\
 & &\ddots& \\
0&0& &1\\
-\mu_0&-\mu_1& &-\mu_{m-1}
\end{pmatrix},\tag{10}
\]
and $h\rho=\diag(h\s,\dots,h\s)$ where $h\in H$. Moreover,
\[
  \End_{\FG}(W_\mu)=\left\{\sum_{i=0}^{m-1}\delta_i (a\rho)^i\mid
                   \delta_i\in\D\right\}.
\]
If $\mu(s)\in Z[s]$, then $\End_{\FG}(W_\mu)\cong \D[s]/\mu(s)\D[s]\cong
\D\tensor_Z \K$ where $\K=Z[s]/\mu(s) Z[s]$.
\end{theorem}

\begin{proof}
Arguing as in Theorem~5, we have a series of right ideals:
\def\hs{\hskip1.5mm}
\[
  \nu(s)\D[s]\subseteq\D[s],\hs\nu(y)(\D,1,\eta)\subseteq(\D,1,\eta),\hs
  \nu(\e x)(\D,\alpha,\lambda)\subseteq(\D,\alpha,\lambda),
\]
and $\sum_{i=0}^n D(\nu_i)(\e X)^i\G$ is a right ideal of
$\G=\End_\FG(V\ind)$.
This right ideal corresponds to the submodule
$V\ind\sum_{i=0}^n D(\nu_i)(\e X)^i\G$
of $V\ind$. It follows
from Eqn~(5a) and $(\e X)^p-\eta=0$
that the minimum polynomial of $\e X$ equals $s^p-\eta$.

Let $v'=v\nu(\e X)$ where $v\in V$. Then
\[
  v'\mu(\e X)=v\nu(\e X)\mu(\e X)=v((\e X)^p-\eta)=v\thinspace0=0.\tag{11}
\]
This proves that (9) is a basis for
\[
  W_\mu=\text{im}\;\nu(\e X)=\ker\nu(\e X)=
  \sum_{i=0}^{m-1}V\sum_{j=0}^n\nu_j\e^{i+j}\alpha^{-(i+j)}a^{i+j}
\]
It follows from Lemma~1 that $h\rho=\diag(h\s,\dots,h\s)$ is a block
scalar matrix ($h\in H$). Since $a=\alpha X$,
\[
  v'(\e X)^i a=v'(\e X)^i \alpha X=v'\alpha\e^{-1}(\e X)^{i+1}.\tag{12}
\]
It follows from Eqns~(11) and (12) that the matrix for $a\rho$ is correct.

It is now a simple matter to show that
$\left\{\sum_{i=0}^{m-1}\delta_i (a\rho)^i\mid\delta_i\in\D\right\}$
is contained in $\End_{\FG}(W_\mu)$.
A familiar calculation shows that an element of $\End_{\FG}(W_\mu)$
is determined by the entries in its top row. As this may be arbitrary,
we have found all the elements of $\End_{\FG}(W_\mu)$.
\end{proof}

It follows from Theorem~6 that a necessary condition for $W_\mu$ to
be irreducible is that $\mu(s)$ is irreducible in $\D[s]$.
Lemma~7 describes an important case when $\End_\FG(W_\mu)$ is a division
ring, and hence $W_\mu$ is irreducible.
The following proof follows Prof. Deitmar's suggestion [D02].

\begin{lemma}
Let $\D$ be a division algebra with center $\F$, and let $\mu(s)\in\F[s]$
be irreducible of prime degree.
Suppose that no $\delta\in\D$ satisfies $\mu(\delta)=0$. Then the quotient
ring $\D[s]/\mu(s)\D[s]$ is a division algebra.
\end{lemma}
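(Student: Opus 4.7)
The plan is to use Jordan--Holder on the simple Artinian structure of $R:=\D[s]/\mu\D[s]$ to show that any non-trivial factorization of $\mu$ in $\D[s]$ would split $\mu$ into linear factors, producing a root of $\mu$ in $\D$ and contradicting the hypothesis.

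Since $\mu\in\F[s]$ is central in $\D[s]$, the ideal $\mu\D[s]$ is two-sided and $R\cong\D\otimes_\F\K$, where $\K=\F[s]/\mu\F[s]$. As $\D$ is central simple over $\F$ and $\K/\F$ is a field extension, $R$ is a central simple $\K$-algebra, so by Wedderburn $R\cong M_m(\D'')$ for some central $\K$-division algebra $\D''$ and some $m\ge 1$; the lemma reduces to showing $m=1$. Because $\D[s]$ is a (left and right) Euclidean domain, factor $\mu=f_1f_2\cdots f_k$ into irreducibles in $\D[s]$. Setting $h_j:=f_1\cdots f_j$, each $h_j$ left-divides $\mu$, so the strictly descending chain $\D[s]\supsetneq h_1\D[s]\supsetneq\cdots\supsetneq h_k\D[s]=\mu\D[s]$ passes to a strict chain of right $R$-submodules of $R$. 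A cancellation argument in the domain $\D[s]$ identifies the $j$th successive quotient with $\D[s]/f_j\D[s]$ as right $\D[s]$-modules; since $f_j$ is irreducible this is simple as a right $\D[s]$-module, and therefore also simple as a right $R$-module.

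Now Jordan--Holder applied to the simple Artinian ring $R\cong M_m(\D'')$, which has composition length $m$ with a unique simple right module up to isomorphism, gives $k=m$ and forces all the modules $\D[s]/f_j\D[s]$ to be mutually isomorphic. Hence all $\deg f_j$ equal some common $d$, with $kd=\deg\mu=p$; as $p$ is prime, either $k=1$ (so $\mu$ is irreducible in $\D[s]$, $m=1$, and $R$ is a division ring, as required) or $d=1$. In the second case each $f_j=s-\delta_j$ for some $\delta_j\in\D$; since the simple right $R$-module $\D[s]/f_j\D[s]$ is annihilated by the central element $\mu$, necessarily $\mu\in f_j\D[s]$, i.e.\ $\mu(s)=(s-\delta_j)q(s)$ in $\D[s]$. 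The standard recursion for the coefficients of $q$ shows $q\in\F[\delta_j][s]$, so its coefficients commute with $\delta_j$, and a direct expansion then yields $\mu(\delta_j)=0$, contradicting the hypothesis. Hence $m=1$.

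The main obstacle is the bookkeeping around the composition series, in particular verifying that each step is genuinely simple as a right $R$-module and extracting the left-divisibility of $\mu$ by each $f_j$ from the $R$-module structure on the successive quotients. Everything else reduces to Wedderburn, Jordan--Holder, and the Euclidean algorithm in $\D[s]$.
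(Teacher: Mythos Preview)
Your argument is correct, and it takes a genuinely different route from the paper's proof. The paper invokes Schur index theory from Pierce: since $\Ind(\D_\K)\mid\Ind(\D)$ and $\Ind(\D)\mid|\K:\F|\,\Ind(\D_\K)$ with $|\K:\F|=p$ prime, either the indices agree (so $\D_\K$ is a division ring) or $\Ind(\D)=p\,\Ind(\D_\K)$, in which case a cited corollary embeds $\K$ into $\D$ and produces a root of $\mu$. Your proof instead stays inside the Euclidean domain $\D[s]$: a factorization $\mu=f_1\cdots f_k$ into irreducibles yields a composition series of $R$ of length $k$, Jordan--H\"older forces $k=m$ and all $\deg f_j$ equal, and primality of $p$ then gives either $k=1$ or linear $f_j$, the latter contradicting the no-root hypothesis after a short evaluation argument (which works because the coefficients of $\mu$ lie in the centre). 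What the paper's approach buys is brevity, at the cost of importing the index machinery; your approach is more self-contained and, incidentally, more in tune with the factorization-in-$\D[t;\alpha]$ philosophy used elsewhere in the paper. One cosmetic remark: in the $d=1$ case you do not actually need the annihilation step to get $(s-\delta_1)\mid\mu$, since $f_1=s-\delta_1$ is already the leftmost factor; but your argument covers all $j$ uniformly, which is harmless.
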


\begin{proof} Let $\K=\F[s]/\mu(s)\F[s]$. Then $\K$ is a field and
$|\K:\F|=\deg\mu(s)$ is prime. Clearly $\mu(s)\D[s]$ is a two-sided ideal of 
$\D[s]$, and $\D[s]/\mu(s)\D[s]$ is isomorphic to $\D_\K=
\D\tensor_\F \K$.
By [L91, 15.1(3)], $\D_\K$ is a central simple $\K$-algebra, and hence
is isomorphic to $M_n(D)$ for some division algebra $D$ over $\F$.
The \emph{degree} of $D$ and the \emph{Schur index} of $\D_\K$ are
defined as follows
\[
  \Deg(D)=(\dim_\F D)^{1/2}\quad\text{and}\quad \Ind(\D_\K)=\Deg(D).
\]
By [P82, Prop. 13.4], $\Ind(\D_\K)$ divides $\Ind(\D)$, and
$\Ind(\D)$ divides $|\K:\F|\,\Ind(\D_\K)$. Thus either
\[
\Ind(\D_\K)=\Ind(\D)=\Deg(\D)=\Deg(D_\K)
\]
and $\D_\K$ is a division algebra by [P82, Prop. 13.4(ii)], or
$\Ind(\D)$ equals $|\K:\F|\,\Ind(\D_\K)$. If the second case occurred, then
by [P82, Cor. 13.4], $\K$ is isomorphic to a subfield of $\D$, and
so $\mu(s)$ has a root in $\D$, contrary to our hypothesis.
\end{proof}

If $\eta\not\in\D^p$, then $\eta\not\in Z^p$ and so $s^p-\eta$ is irreducible
in $Z[s]$, and it follows from Lemma~7 that $V\ind=W_{s^p-\eta}$ is
irreducible. Note that $\End_{\FG}(V\ind)\cong \D\tensor Z[\eta^{1/p}]$
is a division algebra.

\section{The case when $\alpha$ is inner and $\xi^p=\eta$}

\noindent In this section we shall assume that $\xi\in\D^\times$ satisfies
$\xi^p-\eta=0$. Let $y=\e x$ and $z=\xi^{-1} y=\xi^{-1}\e x$.
It is useful to consider the isomorphisms
$(\D,\alpha,\lambda)\to(\D,1,\eta)\to(\D,1,1)$ defined by
$x\mapsto\e^{-1}y$ and $y\mapsto\xi z$. Note $y$ and $z$ are central
in $(\D,1,\eta)$ and $(\D,1,1)$ respectively, and $y^p=\eta$ and $z^p=1$.

\begin{theorem}
Let $V$ be a $G$-stable irreducible $\F H$-module where
$H\triangleleft G$ and $|G/H|=p$ is prime.
Suppose that $\alpha$ induces the inner
automorphism $\alpha(\delta)=\delta^\e$ of the division algebra
$\D=\End_{\F H}(V)$. Set $\eta=\e^p\lambda$ and let
$\xi,\omega\in\D$ satisfy $\xi^p=\eta$ and $\omega^p=1$. Then $\xi\in Z=Z(\D)$.
\newline
\textup{(a)} If $\Char(\F)\ne p$ and $\omega\ne 1$, then $V\ind$ is an
internal direct sum
\[
  V\ind=U(\xi)\dotplus U(\xi\omega)\dotplus\cdots\dotplus U(\xi\omega^{p-1})
\]
where
\[
  U(\xi\omega^j)=V\sum_{i=0}^{p-1} (\xi\omega^j)^{-i}\e^i\alpha^{-i}a^i
\]
is irreducible, and $U(\xi\omega)\cong U(\xi\omega')$ if and only if
$\omega$ and $\omega'$ are conjugate in $\D$. If $\mu(s)$ is an
irreducible factor of $s^p-\eta$ in $Z[s]$, then $W_\mu$ defined in
Theorem~6 is a Wedderburn component of $V\ind$, and 
$W_\mu=U(\theta_1)\dotplus\cdots\dotplus U(\theta_n)$ where
$\theta_1,\dots,\theta_n$ are the roots of $\mu(s)$ in the field
$Z(\xi,\omega)$. In addition, the representation
$\rho_{\theta}\colon G\to\GL( U(\theta))$ afforded by $U(\theta)$
relative to the basis $e'_0,\dots,e'_{d-1}$ where
\[
  e'_j=e_j\sum_{i=0}^{p-1} \theta^{-i}\e^i\alpha^{-i}a^i
\]
satisfies
\[
  a\rho_{\theta}=\alpha\e^{-1}\theta\qquad\text{and}\qquad
  h\rho_{\theta}=h\sigma\tag{12a,b}
\]
for $h\in H$, and $\End_{\FG}(U(\theta))=C_\D(\theta)$.
\newline
\textup{(b)} If $\Char(\F)=p$, then $\omega=1$ and $V\ind$ is uniserial with
unique composition series $\{0\}=W_0\subset W_1\subset\cdots\subset W_p=V\ind$
where
\[
  W_k=\sum_{i=1}^k V\sum_{j=0}^{p-i}
       {i+j-1\choose j}\xi^{-j}\e^j\alpha^{-j}a^j.
\]
Moreover, $W_{k-1}/W_k\cong U(\xi)$ for $k=1,\dots,p$
and $\End_{\FG}(U(\xi))=\D$.
\end{theorem}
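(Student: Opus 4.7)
The plan is to transport both parts across the chain of isomorphisms $\G\cong(\D,\alpha,\lambda)\cong(\D,1,\eta)\cong(\D,1,1)$ set up immediately before the statement, identifying $(\D,1,1)$ with $R:=\D[z]/(z^p-1)$, and then read the submodule lattice of $V\ind$ off the (right) ideal lattice of $R$. The first step is to verify $\xi\in Z$; this is essentially forced by requiring the substitution $y\mapsto\xi z$ to define a ring homomorphism $(\D,1,\eta)\to R$, in which both $y$ and $z$ are central, so $\xi z\cdot\delta=\delta\cdot\xi z$ gives $\xi\delta=\delta\xi$ for every $\delta\in\D$. With $\xi\in Z$, the element $\e X\in\G$ corresponds to $\xi z\in R$, and for any $\theta\in\D$ with $\theta^p=\eta$ the operator $\e X-\theta$ on $V\ind$ mirrors $\xi(z-\omega^j)$ in $R$ when $\theta=\xi\omega^j$.

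For Part~(a), with $\Char(\F)\ne p$ and $\omega\ne 1$, the polynomial $z^p-1=\prod_{j=0}^{p-1}(z-\omega^j)$ has pairwise distinct roots. The operator $T:=\xi^{-1}\e X\in\G$ satisfies $T^p=1$ (using that $\xi\in Z$), so its minimal polynomial divides the separable polynomial $s^p-1$; standard spectral decomposition via Lagrange idempotents in $\F[T]$ then yields orthogonal projections $\pi_j\in\G$ with $\ker\pi_j=\ker(\e X-\xi\omega^j)$ summing to $V\ind$. A direct calculation, following the pattern of Lemma~1 and Theorem~5(b) with $\mu=\e\theta^{-1}$, identifies $\ker(\e X-\xi\omega^j)$ with $U(\xi\omega^j)=V\sum_i(\xi\omega^j)^{-i}\e^i\alpha^{-i}a^i$. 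For the representation I would use the factorization $a\s\ind=AX$ with $A=\diag(\alpha,\dots,\alpha)$, noting that $A$ commutes with $\e X$ because $\alpha(\e)=\e$; since $\e X=\theta$ on $U(\theta)$, the action of $a$ there is $AX=\alpha\e^{-1}\theta$. The endomorphism-ring formula $\End_{\FG}(U(\theta))=C_\D(\theta)$ then follows because $(\alpha\e^{-1})$ centralizes $\D$ pointwise (this is exactly the identity $\alpha(\delta)=\e^{-1}\delta\e$), so commuting with $a\rho_\theta$ reduces to commuting with $\theta$. The isomorphism criterion $U(\xi\omega)\cong U(\xi\omega')\iff\omega,\omega'$ are $\D^\times$-conjugate comes from uniqueness of $\theta$ modulo $\D^\times$-conjugacy in the formula $a\rho_\theta=\alpha\e^{-1}\theta$, and the identification of $W_\mu=\bigoplus U(\theta)$ over the roots of $\mu(s)$ in $Z(\xi,\omega)$ is the Galois-theoretic grouping of the simple components.

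For Part~(b), in characteristic $p$ we have $(\omega-1)^p=\omega^p-1=0$ and the absence of zero divisors in $\D$ forces $\omega=1$. Thus $R=\D[z]/(z-1)^p$ is a local ring whose right ideals form the chain $\{(z-1)^iR\}_{i=0}^{p}$, which translates to the submodule chain $W_k=\ker(\e X-\xi)^k$ of $V\ind$, with $\dim_\F W_k=kd$ by rank-nullity (using $(\e X-\xi)^p=0$, which holds via the binomial theorem in characteristic $p$ since $\xi$ and $\e X$ commute). Each simple quotient $W_k/W_{k-1}$ is annihilated by $\e X-\xi$, so it factors through $\G/(\e X-\xi)\G\cong\D$ and is therefore isomorphic to $U(\xi)=W_1$; and $\End_{\FG}(U(\xi))=C_\D(\xi)=\D$ since $\xi\in Z$. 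The main obstacle is verifying the explicit binomial formula for $W_k$; I would prove it by induction on $i$. Setting $u^{(i)}_v:=v\sum_{j=0}^{p-i}\binom{i+j-1}{j}\xi^{-j}\e^j\alpha^{-j}a^j$, I would compute $(\e X-\xi)u^{(i)}_v$ coordinate by coordinate, using Pascal's identity $\binom{i+j-1}{j}=\binom{i+j-2}{j-1}+\binom{i+j-2}{j}$ for interior positions and the mod-$p$ congruence $\binom{p-1}{k}\equiv(-1)^k$ to match the wraparound term where the $\lambda$ in the bottom-left block of $X$ contributes. The outcome $(\e X-\xi)u^{(i)}_v=-\xi\,u^{(i-1)}_v$ shows $u^{(i)}_v\in W_i\setminus W_{i-1}$ and lets the description of $W_k$ be built up inductively, matching the stated sum.
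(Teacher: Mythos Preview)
Your proposal is correct and, for Part~(a), tracks the paper closely: both arguments identify $U(\xi\omega^j)$ as the kernel of $\e X-\xi\omega^j$, verify Eqns~(12a,b) by writing $a\s\ind=AX$, and reduce the Hom computation to the condition $\delta\omega=\omega'\delta$. Two small slips: your projections should satisfy $\im\pi_j=\ker(\e X-\xi\omega^j)$ (not $\ker\pi_j$), and the idempotents do not live in $\F[T]$ in general since $\omega$ need not lie in $\F$; they live in the commutative subalgebra of $\G$ generated by $T$ and $D(\omega)$ (these commute because $\e\alpha(\omega)=\omega\e$). The paper sidesteps this by verifying directly that $U(\xi\omega)$ is $a$-stable (its Eqn~(14)) and then invoking Dixon's theorem for the conjugacy criterion.

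Part~(b) is where you and the paper genuinely diverge. The paper conjugates $\xi^{-1}\e X$ to a single Jordan block $J$ via an explicit change of basis $T=S^{-1}R$, where $R=\diag(1,\xi^{-1}\e,\dots,(\xi^{-1}\e)^{p-1})$ and $S$ is the lower-triangular Pascal matrix $\bigl(\binom{i}{j}\bigr)$; the submodules $W_k$ are then read off as the spans of the last $k$ rows of $T$, and the identity $\binom{p-i}{j}=(-1)^j\binom{i+j-1}{j}$ in characteristic~$p$ converts those rows into the stated formula. Your approach instead shows $u^{(i)}_v(\e X-\xi)=-\xi\,u^{(i-1)}_v$ by a direct Pascal-identity induction, the key cancellation being $\binom{p-1}{p-i+1}+\binom{p-1}{p-i}=\binom{p}{p-i+1}\equiv 0$; this is equally valid and arguably more elementary, though the paper's normal-form computation explains \emph{why} the binomial coefficients appear rather than merely verifying that they work.
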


\begin{proof}
Since $z\delta=\delta z$, we see that
$(\xi^{-1}\e x)\delta=\delta(\xi^{-1}\e x)$. This implies that
$\xi^{-1}\delta=\delta\xi^{-1}$ and so $\xi\in Z$.\newline
Case (a): Now $(\xi\omega)^p=\xi^p\omega^p=\eta$, hence
\[
  y^p-\eta=y^p-(\xi\omega)^p=(y-\xi\omega)
  \left(\sum_{i=0}^{p-1}(\xi\omega)^{p-1-i}y^i\right).\tag{13}
\]
Therefore $V\ind\sum_{i=0}^{p-1}(\xi\omega)^{p-1-i}(\e X)^i\G$ is a
submodule of $V\ind$ where $X$ is given by Eqn~(5a).
We show directly that $U(\xi\omega)$
is a submodule of $V\ind$. This follows from
\begin{align*}
  (v(\xi\omega)^{-i}\e^i\alpha^{-i}a^i)a&=
  v\alpha(\alpha^{-1}(\xi\omega)^{-i}\e^i\alpha)\alpha^{-(i+1)}a^{i+1}\\
  &=v\alpha\e^{-1}\xi\omega(\xi\omega)^{-(i+1)}\e^{i+1}\alpha^{-(i+1)}a^{i+1}
  \tag{14}
\end{align*}
and setting $i=p-1$ in the right-hand side of Eqn~(14) gives
\[
  v\alpha\e^{-1}\xi\omega(\xi\omega)^{-p}\e^{p}\alpha^{-p}a^p
  =v\alpha\e^{-1}\xi\omega\eta^{-1}\e^{p}\lambda
  =v\alpha\e^{-1}\xi\omega.
\]
As $U(\xi\omega)\res\cong V$, we see that $U(\xi\omega)$ is an irreducible
$\FG$-submodule of $V\ind$. Setting $\theta=\xi\omega$ establishes the
truth of Eqns~(12a,b).

We may calculate $\textup{Hom}(U(\xi\omega),U(\xi\omega'))$ directly
by finding all $\delta$ in $\End_\F(V)$ that intertwine $\rho_{\xi\omega}$
and $\rho_{\xi\omega'}$. As $\delta$ intertwines $h\rho_{\xi\omega}$
and $h\rho_{\xi\omega'}$, it follows that $\delta$
commutes with $H\sigma$, and hence $\delta\in\D$. Also
\[
  \delta(\alpha\e^{-1}\xi\omega)=(\alpha\e^{-1}\xi\omega')\delta
\]
so $\delta^{\alpha\e^{-1}}\xi\omega=\xi\omega'\delta$. Since
$\xi\in Z^\times$ and $\delta^{\alpha\e^{-1}}=\delta$,
this amounts to $\delta\omega=\omega'\delta$.
Setting $i=j$ shows that
$\End_{\FG}(U(\xi\omega))=C_\D(\omega)$.

The Galois group $\textup{Gal}(Z(\omega):Z)$ is cyclic of order
dividing $p-1$.  Also $\omega$ and $\omega'$ are conjugate in
$\textup{Gal}(Z(\omega):Z)$ if and only if they share the same
minimal polynomial over $Z$. The latter holds by Dixon's Theorem
[L91, 16.8] if and only if $\omega$ and $\omega'$ are conjugate in $\D$.
Note that $\omega$ and $\omega'$ share the same minimal polynomial over
$Z$ precisely when  $\xi\omega$ and $\xi\omega'$ share the same minimal
polynomial. This proves that $W_\mu$ is a Wedderburn component of $V\ind$.

Case (b): Suppose now that $\Char(\F)=p$. Then $\omega=1$ and Eqn~(13) becomes
$y^p-\eta=(y-\xi)^p=(y-\xi)(\sum_{i=0}^{p-1}{p-1\choose i}(-\xi)^{p-1-i}y^i)$.
As
\[
  \G=\End_\FG(V\ind)\cong(\D,\alpha,\lambda)\cong(\D,1,\eta)\cong(\D,1,1)
  \cong\D[z]/(z-1)^p\D[z]
\]
has a unique composition series, so too does $V\ind$. By noting that
$z=\xi^{-1}\e x$ and
$D(\xi^{-1}\e)=\xi^{-1}\e$, we see that $W_i=V\ind(\xi^{-1}\e X-1)^{p-i}\G$
defines the unique composition series for $V\ind$ where $X$ is given
by Eqn~(5a).

Let $R$ be the diagonal matrix $\diag(1,\xi^{-1}\e,\dots,(\xi^{-1}\e)^{p-1})$,
and let $S$ be the matrix whose $(i,j)$th block is the binomial coefficient
${i\choose j}$ where $0\le i,j<p$. A direct calculation verifies that
$R(\xi^{-1}\e X)R^{-1}=C$ and $S^{-1}CS=J$ where
\[
C=\begin{pmatrix}0&1& &0\\
 & &\ddots& \\
0&0& &1\\
1&0& &0
\end{pmatrix}
\quad\text{and}\quad
J=\begin{pmatrix}
1&1& & \\
 &\ddots& & \\
  &   &1&1\\
  & & &1
\end{pmatrix}.
\]
Therefore $\xi^{-1}\e X-1=T^{-1}(J-1)T$ where $T=S^{-1}R$, and hence
\[
  W_k=V\ind(\xi^{-1}\e X-1)^{p-k}=V\ind\;T^{-1}(J-1)^{p-k}T=V\ind (J-1)^{p-k}T.
\]
It is easily seen that $\im (J-1)^{p-k}=\ker (J-1)^k$ is the subspace
$(0,\dots,0,V,\dots,V)$ where the first $V$ is in column $p-k$.
The $(i,j)$th entry of $T=S^{-1}R$ is $(-1)^{i+j}{i\choose j}(\xi^{-1}\e)^j$.
The last row of $T$ gives
\[
  W_1=V\sum_{j=0}^{p-1}(-1)^{p-1+j}{p-1\choose j}(\xi^{-1}\e)^j\alpha^{-j}a^j.
\]
More generally, the last $k$ rows of $T$ give
\[
  W_k=\sum_{i=1}^kV\sum_{j=0}^{p-i}
       (-1)^{p-i+j}{p-i\choose j}(\xi^{-1}\e)^j\alpha^{-j}a^j.
\]
Since $p-i-\ell=-(i+\ell)$ in a field (such as $\F$) of characteristic $p$,
we see that ${p-i\choose j}=(-1)^j{i+j-1\choose j}$ and the formula for
$W_k$ simplifies to
\[
  W_k=\sum_{i=1}^k V\sum_{j=0}^{p-i}{i+j-1\choose j}\xi^{-j}\e^j\alpha^{-j}a^j.
\]
Setting $k=1$ shows $W_1=U(\xi)$. A direct calculation
shows that $W_{i-1}/W_i\cong U(\xi)$. We showed in Part~(a) that
$\End_\FG(U(\xi))$ equals $C_\D(\xi)=\D$.
\end{proof}

In Case (a), $C_\D(\xi\omega)$ equals $\D$ precisely when $\omega\in Z$.
If $\D$ is
the rational quaternions, and $\omega$ is primitive cube root of unity, then
$C_\D(\omega)$ equals $\Q(\omega)$. There are infinitely many primitive cube
roots of~1 in this case, and they form a conjugacy class of $\D$
by Dixon's Theorem (as they all satisfy the irreducible polynomial $s^2+s+1$
over $\Q$). Thus isomorphism of the submodules $U(\xi\omega)$
is governed by conjugacy in $\D$, and not conjugacy in
$\textup{Gal}(\Q(\omega):\Q)$.

Finally, it remains to generalize Theorem~8(a) to allow for the possibility
that $\D$ may not contain a primitive $p$th root of 1.

\begin{theorem}
Let $V$ be a $G$-stable irreducible $\F H$-module where
$H\triangleleft G$ and $|G/H|=p$ is prime.
Suppose that $\e,\xi\in\D$ satisfy $\alpha(\delta)=\delta^\e$ ($\delta\in\D$)
and $\xi^p-\eta=0$ where $\eta=\e^p\lambda\in Z=Z(\D)$.
In addition, suppose that $\Char(\F)\ne p$.
Then $V\ind$ is an internal direct sum
\[
  V\ind=W_{\mu_1}\dotplus\cdots\dotplus W_{\mu_r}
\]
where $s^p-\eta=\mu_1(s)\cdots\mu_r(s)$ is a factorization into
monic irreducibles over $Z$, and where $W_\mu$ defined in
Theorem~6. If $\mu(s)$ is a monic irreducible factor of $s^p-\eta$,
and $\mu(s)=\nu_1(s)\cdots\nu_n(s)$ where the $\nu_i(s)$ are monic and
irreducible in $\D[s]$, then $W_\mu$ is a Wedderburn component of
$V\ind$, and $W_\mu\cong W_{\nu_n}^{\oplus n}$ where $W_{\nu_n}$
is an irreducible $\FG$-module and $\End_{\FG}(W_{\nu_n})$ is given
in Theorem~6. In addition,
\[
  \End_{\FG}(W_{\nu_n})\cong B/\nu_n(s)\D[s]
\]
where $B=\{\delta(s)\in\D[s]\mid \delta(s)\nu_n(s)\in\nu_n(s)\Delta[s]\}$ is
the idealizer of the right ideal $\nu_n(s)\Delta[s]$.
\end{theorem}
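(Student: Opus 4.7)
The plan is to mimic the argument of Theorem~8(a) in greater generality: first split $\G=\End_\FG(V\ind)$ into a direct product of simple rings via the Chinese Remainder Theorem applied in $Z[s]$, and then use the non-commutative PID structure of $\D[s]$ to refine each Wedderburn component. Specifically, I would exploit the chain of isomorphisms $\G\cong(\D,\alpha,\lambda)\cong(\D,1,\eta)\cong\D[s]/(s^p-\eta)\D[s]\cong\D\otimes_Z Z[s]/(s^p-\eta)$ established in Section~6, the last step using that $\eta\in Z$. Since $\Char(\F)\ne p$ and $\F\subseteq Z$, the derivative $ps^{p-1}$ is nonzero and $s^p-\eta$ is separable in $Z[s]$, so its factorization $s^p-\eta=\mu_1(s)\cdots\mu_r(s)$ into monic $Z$-irreducibles has pairwise coprime factors. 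The Chinese Remainder Theorem then yields
\[
  \G\;\cong\;\prod_{i=1}^r \D\otimes_Z\K_i\;\cong\;\prod_{i=1}^r \D[s]/\mu_i(s)\D[s],
\]
where $\K_i:=Z[s]/\mu_i(s)Z[s]$; each summand $A_i:=\D[s]/\mu_i(s)\D[s]\cong\D\otimes_Z\K_i$ is a central simple $\K_i$-algebra by [L91,~15.1].

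Next, I would translate this product decomposition into an $\FG$-module decomposition of $V\ind$. Let $e_i(s)\in Z[s]$ lift the $i$-th primitive central idempotent of $Z[s]/(s^p-\eta)$, so $\prod_{j\ne i}\mu_j$ divides $e_i$ in $Z[s]$ while $e_i$ is a unit modulo $\mu_i$. Under $s\mapsto\e X$, the idempotent $e_i(\e X)\in\G$ acts on $V\ind$ with image $V\ind\cdot e_i(\e X)=V\ind\cdot\bigl(\prod_{j\ne i}\mu_j\bigr)(\e X)$, since on the kernel of $\mu_i(\e X)$ the cofactor $\prod_{j\ne i}\mu_j$ is invertible (being coprime to $\mu_i$) while on every other generalized eigenspace both operators vanish. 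This image is precisely the submodule $W_{\mu_i}$ of Theorem~6 built from the right-factorization $s^p-\eta=\bigl(\prod_{j\ne i}\mu_j\bigr)\mu_i$. Hence $V\ind=W_{\mu_1}\oplus\cdots\oplus W_{\mu_r}$, and since $\End_\FG(W_{\mu_i})\cong A_i$ is simple, each $W_{\mu_i}$ is isotypic, i.e.\ a Wedderburn component.

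To refine a single $W_\mu$ (with $\mu=\mu_i$), I would factor $\mu=\nu_1\nu_2\cdots\nu_n$ into monic irreducibles in the non-commutative PID $\D[s]$. The descending chain $\D[s]\supset\nu_1\D[s]\supset\nu_1\nu_2\D[s]\supset\cdots\supset\mu\D[s]$ reduces, modulo $\mu\D[s]$, to a composition series of length $n$ for the right $A=A_i$-module $A$, with successive quotients $\nu_1\cdots\nu_{k-1}\D[s]/\nu_1\cdots\nu_k\D[s]\cong\D[s]/\nu_k\D[s]$ via left multiplication by $\nu_1\cdots\nu_{k-1}$ (injective since $\D[s]$ is a domain). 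Because $A$ is simple Artinian, all composition factors are isomorphic as right $A$-modules, so $\D[s]/\nu_k\D[s]\cong\D[s]/\nu_n\D[s]$ for every $k$ and $A\cong(\D[s]/\nu_n\D[s])^{\oplus n}$. Using the standard correspondence between $\FG$-submodules of the isotypic component $W_\mu$ and right ideals of its endomorphism ring, this translates to $W_\mu\cong W_{\nu_n}^{\oplus n}$ as $\FG$-modules, where $W_{\nu_n}$ is the irreducible submodule from Theorem~6 attached to the right-factorization $s^p-\eta=\bigl(\prod_{j\ne i}\mu_j\cdot\nu_1\cdots\nu_{n-1}\bigr)\nu_n$.

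Finally, identifying $W_{\nu_n}$ with the simple right $\D[s]$-module $\D[s]/\nu_n(s)\D[s]$, the endomorphism ring computes as follows: any $\D[s]$-linear endomorphism is determined by the image $\bar f$ of the identity coset, well-definedness forces $f\nu_n\in\nu_n\D[s]$ (i.e.\ $f\in B$), and two such choices agree iff they differ by an element of $\nu_n\D[s]$; composition corresponds to multiplication in $B$, yielding $\End_\FG(W_{\nu_n})\cong B/\nu_n(s)\D[s]$. The main obstacle, in my view, is the translation step in the second and third paragraphs: pinning down that the abstract central idempotents of $\G$ (and then the minimal right ideals of each $A_i$) match, up to equality rather than just isomorphism, the submodules $W_{\mu_i}$ and $W_{\nu_n}$ constructed from the explicit polynomial formulas of Theorem~6. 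This is handled by the identity $V\ind\cdot e_i(\e X)=V\ind\cdot\bigl(\prod_{j\ne i}\mu_j\bigr)(\e X)$, together with the left-multiplication isomorphisms in the composition series of $\D[s]$.
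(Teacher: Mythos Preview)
Your proof is correct and follows essentially the same route as the paper's: split $\G\cong\D[s]/(s^p-\eta)\D[s]$ via the Chinese Remainder Theorem over $Z[s]$, identify each factor $\D\otimes_Z\K_i$ as a central simple $\K_i$-algebra, and then refine using the irreducible factorization of $\mu$ in the PID $\D[s]$. The only difference is one of packaging---the paper cites [J96, Theorem~1.2.19(b)] for the matrix-ring structure $\D_\K\cong M_n(B/\nu_n\D[s])$ and the similarity of the $\nu_k$, whereas you unpack that citation into the explicit composition-series and idealizer arguments---so your version is more self-contained but not a different approach.
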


\begin{proof}
Since $\Char(\F)\ne p$, the monic polynomials $\mu_1(s),\dots,\mu_r(s)$
are distinct and pairwise coprime in $Z[s]$. From this it follows that
$V\ind$ equals $W_{\mu_1}\dotplus\cdots\dotplus W_{\mu_r}$. By Theorem~6,
$\End_\FG(W_\mu)\cong\D_\K$ where 
$\D_\K\cong\D[s]/\mu(s)\D[s]\cong\D\tensor_Z\K$, and $\K$ is
the field $Z[s]/\mu(s)Z[s]$. By [L91, 15.1(3)], $\D_\K$ is a
simple ring. Therefore $\mu(s)\D[s]$ is a two-sided maximal ideal of $\D[s]$,
and so $\mu(s)$ is called a two-sided maximal element of $\D[s]$.
By [J96, Theorem 1.2.19(b)], $\D_\K\cong M_n(D)$ where $D$ is the
division ring $B/\nu_n(s)\D[s]$. Moreover, $Z(\D_\K)\cong Z(M_n(D))$
so $\K\cong Z(D)$. Thus $W_\mu\cong W_{\nu_n}^{\oplus n}$ where
$W_{\nu_n}$ is an
irreducible submodule of $V\ind$ and $\End_\FG(W_{\nu_n})\cong D$.
In addition, $\nu_1,\dots,\nu_n$ are similar [J96, Def. 1.2.7],
and $W_{\nu_1},\dots,W_{\nu_n}$ are isomorphic.

If $\mu(s),\mu'(s)$ are distinct monic irreducible factors of
$s^p-\eta$ in $Z[s]$ and $\nu(s),\nu'(s)$ in $\D[s]$ are monic
irreducible factors of $\mu(s)$ and $\mu'(s)$ respectively, then it follows
from [J96, Def. 1.2.7] that $\nu(s)$ and $\nu'(s)$ are not similar. This
means that an
irreducible summand of $W_\mu$ is not isomorphic to an irreducible
summand of $W_{\mu'}$. Hence the $W_\mu$ are Wedderburn components as claimed.
\end{proof}

\section*{Acknowledgment}

\noindent I am very grateful to Prof. A.\,D.\,H. Deitmar for
providing a proof [D02] of of Lemma~7 in the case when $s^p-\eta$
has no root in $\D$.

\section*{References}

\begin{itemize}

\item[{[CR90]}] C.W.\,Curtis and I.\,Reiner, Methods of Representation
Theory: with Applications to Finite Groups and Orders, Vol. 1,
Classic Library Edn, John Wiley and Sons, 1990.

\item[{[D02]}] A.\,D.\,H. Deitmar, {\tt sci.math.research}, September 6, 2002.

\item[{[GK96]}] S.P.\,Glasby and L.G.\,Kov\'acs, \emph{Irreducible modules and
normal subgroups of prime index}, Comm. Algebra 24 (1996), no. 4,
1529--1546. (MR~97a:20012)

\item[{[IL00]}] G. Ivanyos and K. Lux, Treating the exceptional cases
of the MeatAxe, Experiment. Math. 9 (2000), no. 3, 373--381. (MR 2001j:16067)

\item[{[HR94]}] D.F.\,Holt and S.\,Rees, \emph{Testing modules for
irreducibility}, J. Austral. Math. Soc. Ser. A 57 (1994), no. 1,
1--16. (MR~95e:20023)

\item[{[J96]}] N. Jacobson, Finite-Dimensional Division Algebras over Fields,
Springer-Verlag, 1996.

\item[{[L91]}] T.Y.\,Lam, A First Course in Noncommutative Rings, Graduate
Texts in Mathematics 131, Springer-Verlag, 1991.

\item[{[NP95]}] P.M.\,Neumann and C.E.\,Praeger, \emph{Cyclic matrices over
finite fields}, J. London Math. Soc. 52 (1995), no. 2,
263--284. (MR~96j:15017)

\item[{[P84]}] R.A.\,Parker, The computer calculation of modular characters
(the meat-axe), Computational group theory (Durham, 1982), 267--274,
Academic Press, London, 1984. (MR~84k:20041)

\item[{[P82]}] R.\,S. Pierce, Associative Algebras, Graduate Texts in
Mathematics 88, Springer-Verlag, 1982.

\end{itemize}

\vskip3mm
\goodbreak
{\tiny\scshape
\begin{tabbing}
\=Central Washington University\=\kill
\>S.\,P. Glasby                \\
\>Department of Mathematics    \\
\>Central Washington University\\
\>WA 98926-7424, USA           \\
\>\scriptsize\upshape\ttfamily GlasbyS@cwu.edu\\
\end{tabbing}
}

\end{document}